\newtheoremstyle{italicheadplainbody} 
  {\topsep}   
  {\topsep}   
  {\normalfont} 
  {}          
  {\itshape}  
  {.}         
  {.5em}      
  {}          
\theoremstyle{italicheadplainbody}
\newtheorem{assumption}{Assumption}
\newtheorem{lemma}{Lemma}
\def\BibTeX{{\rm B\kern-.05em{\sc i\kern-.025em b}\kern-.08em
    T\kern-.1667em\lower.7ex\hbox{E}\kern-.125emX}}
\begin{document}

\title{Approximate Solution Methods for the Average Reward Criterion in Optimal Tracking Control of Linear Systems\\
}

\author{\IEEEauthorblockN{ Duc Cuong Nguyen}
\IEEEauthorblockA{\textit{Automatic Control LTH,} 
\textit{Lund University}\\
Lund, Sweeden \\
}
}

\maketitle

\begin{abstract}
This paper studies optimal control under the average-reward/cost criterion for deterministic linear systems. We derive the value function and optimal policy, and propose an approximate solution using Model Predictive Control to enable practical implementation.
\end{abstract}

\section{Introduction}
The average reward formulation has recently attracted increasing attention in reinforcement learning, particularly in the context of continuing tasks \cite{sutton}. Unlike the commonly used discounted reward approach \cite{zhang2021policy}, the average reward criterion offers a hyperparameter-free alternative that naturally captures long-term system behavior \cite{sutton}. This advantage becomes especially significant in settings such as inverse reinforcement learning \cite{wu2023inverse}, where the need to specify a discount factor to model expert behavior can introduce ambiguity. Motivated by these benefits, this paper explores the application of the average reward framework to deterministic tracking control problems \cite{miquel:hal-02987731}, where traditional cost functions often diverge and lack finite solutions. We propose a novel formulation of the tracking control problem—an area with extensive prior literature and practical relevance—under the average cost setting. Our contributions include an introductory analysis of the value function solution to the Bellman equation for general linear systems. Furthermore, we develop a practical, approximate solution method based on Model Predictive Control (MPC), enabling real-time implementation in control applications.

\section{Main result}
Consider the following linear system:
\begin{equation}\label{linear}
    \begin{cases}
          x_{k+1} = Ax_k +Bu_k\\
          y_{k}= Cx_k
    \end{cases}
\end{equation}
where $A \in \mathbb{R}^{n \times n}$, $B \in \mathbb{R}^{n \times m}$, $C \in \mathbb{R}^{p \times n}$.
This study focuses on tracking a constant reference signal $r_k= r_{ss} \in \mathbb{R}^{p}$  over an infinite horizon.
The natural stage cost for this problem is defined as \( C(e_k, u_k) = e_k^\top Q e_k + u_k^\top R u_k \), where \( e_k = Cx_k - r_k \), and \( Q, R \) are positive definite matrices. However, the infinite-horizon sum of this cost becomes divergent, even as \( e_k \to 0 \) for \( k \to \infty \), due to the control input \( u_k \) converging to a nonzero steady-state value. Rather than applying a discount factor as in \cite{modares2014linear}, we propose the following modified cost index:
\begin{equation}
\begin{aligned}
    J &= \sum_{k=0}^\infty \left| x_k^\top C^\top Q C x_k + u_k^\top R u_k - x_{ss}^\top C^\top Q Cx_{ss} - u_{ss}^\top R u_{ss} \right| \\
    &= \sum_{k=0}^\infty \left| C(x_k, u_k) - C_{ss} \right|,
    \end{aligned}
\end{equation}
where \( C_{ss} = C(Cx_{k\rightarrow\infty}, u_{k\rightarrow\infty}) \) denotes the steady-state cost which is analogous to the Average-Reward definition $\underset{N \rightarrow \infty}{\lim}\frac{1}{N}\mathbb{E}[\sum_{k=0}^N C(x_k,u_k)]$ in a stochastic setting.
The cost function proposed can be rewritten as following:
\begin{equation}
\begin{aligned}\label{true_avg}
     J &= \sum_{k=0}^\infty \left| \tilde{x}_k^\top C^\top Q C\tilde{x}_k + \tilde{u}_k^\top R \tilde{u}_k + x_{ss}^\top C^\top Q \tilde{x}_{k} + u_{ss}^\top R \tilde{u}_{k} \right|  \\
     &= \sum_{k=0}^\infty \left| \tilde{x}_k^\top C^\top Q C \tilde{x}_k + \tilde{u}_k^\top R \tilde{u}_k + s^\top\tilde{x}_{k} + r^\top \tilde{u}_{k} \right| 
\end{aligned}
\end{equation}
where $\tilde{x}_k = x-x_{ss}$, $\tilde{u}_k= u_k- u_{ss}$

Note that the proposed cost index shares structural similarities with the optimal tracking cost presented in \cite{miquel:hal-02987731}, except for the absence of the linear term. Under the same assumptions as in \cite{miquel:hal-02987731}, we will show that there exists a value function \( V(\tilde{x}_k) \) that satisfies the Bellman equation.

\begin{assumption}
The pair $(A,\sqrt{Q}C)$ is observable and $(A,B)$ is controllable.
\end{assumption}
\begin{assumption}The matrix 
\begin{equation}
\begin{bmatrix}
    A - I & B \\
    C & \mathbf{0}
\end{bmatrix}
\end{equation}
is assumed to be invertible, where \( I \) is the identity matrix and \( \mathbf{0} \) is the zero matrix, both of appropriate dimensions.
\end{assumption}

\begin{lemma}
Under Assumption 1, there exists a value function \( V(\tilde{x}_k) \) that satisfies the Bellman equation:
\begin{equation}
    V(\tilde{x}_k) = \min_{\{u_k\}_{k=0}^\infty} \left\{ \left| C(x_k, u_k) - C_{ss} \right| + V(\tilde{x}_{k+1}) \right\}.
\end{equation}
\end{lemma}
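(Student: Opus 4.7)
The plan is to construct $V$ as the infimum of the infinite-horizon modified cost and to deduce the Bellman recursion from the deterministic dynamic programming principle. Concretely, I would define
\begin{equation*}
V(\tilde{x}_k) := \inf_{\{u_j\}_{j=k}^{\infty}} \sum_{j=k}^{\infty} \bigl| C(x_j,u_j) - C_{ss} \bigr|,
\end{equation*}
then show (i) $V$ is finite on the whole state space and (ii) it satisfies the stated recursion.

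For (i), I first pass to the error variables, invoking Assumption~2 to make the steady-state pair $(x_{ss},u_{ss})$ well-defined and unique; in these coordinates the error dynamics are $\tilde{x}_{k+1}=A\tilde{x}_k+B\tilde{u}_k$ and the stage cost is exactly the expression in~(3). Controllability of $(A,B)$ from Assumption~1 supplies a gain $K$ with $A-BK$ Schur, so the admissible feedback $\tilde{u}_j=-K\tilde{x}_j$ yields $\|\tilde{x}_j\|,\|\tilde{u}_j\|=O(\rho^{j-k})$ for some $\rho<1$. Substituting into~(3), the $j$-th integrand is bounded by a constant multiple of $\rho^{j-k}\|\tilde{x}_k\|+\rho^{2(j-k)}\|\tilde{x}_k\|^2$, which is summable in $j$; this furnishes a finite upper bound on $V(\tilde{x}_k)$. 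Non-negativity of $|\cdot|$ gives the matching lower bound $V\ge 0$, so $V$ is well-defined.

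For (ii), with $V$ finite the Bellman equation follows from the standard decomposition: split $\inf_{\{u_j\}_{j=k}^{\infty}}$ into $\inf_{u_k}\inf_{\{u_j\}_{j=k+1}^{\infty}}$; the inner infimum depends on the past only through $\tilde{x}_{k+1}$ and therefore equals $V(\tilde{x}_{k+1})$. Coercivity of the stage cost in $u_k$ (ensured by $R\succ 0$), together with continuity of the integrand in $u_k$, promotes the outer infimum to a minimum, so $\min$ may be written in place of $\inf$.

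The main obstacle is the absolute value in the stage cost: it destroys the quadratic structure that would otherwise allow a Riccati-based characterization of $V$ as a quadratic-plus-linear form, which is why one cannot simply quote the discrete LQR literature. For the existence claim of this lemma, however, the obstruction is mild—non-negativity of $|\cdot|$ and the geometric decay produced by any stabilizing feedback are all the structural facts required. The observability half of Assumption~1 plays no role in the finiteness argument above; it would enter in a follow-up result establishing that $V$ is strictly positive away from $\tilde{x}_k=0$ and in the subsequent characterization of the optimal policy.
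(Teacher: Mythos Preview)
Your proposal is correct and follows essentially the same route as the paper: define $V$ as the infimum of the infinite-horizon cost, exhibit a stabilizing linear feedback from controllability to obtain geometric decay of $(\tilde{x}_k,\tilde{u}_k)$ and hence summability of the stage costs, and then appeal to the standard dynamic-programming decomposition to obtain the Bellman recursion. If anything, you are slightly more explicit than the paper in justifying why the outer $\inf$ is attained and in flagging that Assumption~2 is what makes $(x_{ss},u_{ss})$ well-defined while the observability part of Assumption~1 is not actually used for finiteness.
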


\begin{proof}
To track the reference signal over an infinite horizon, the following steady-state condition must be satisfied as \( k \to \infty \):
\begin{equation}
    \begin{bmatrix}
        A - I & B \\
        C & \mathbf{0}
    \end{bmatrix}
    \begin{bmatrix}
        x_{ss} \\
        u_{ss}
    \end{bmatrix}
    =
    \begin{bmatrix}
        \mathbf{0} \\
        r_{ss}
    \end{bmatrix}.
\end{equation}

The dynamics of the deviation state become:
\begin{equation}
    \tilde{x}_{k+1} = A \tilde{x}_k + B \tilde{u}_k.
\end{equation}

By Assumption 1, there exists a feedback gain \( K \) such that the closed-loop matrix \( A + BK \) is Schur stable (i.e., all eigenvalues lie inside the unit circle). Suppose the spectral radius satisfies \( \rho < 1 \). Then there exists a constant \( M > 0 \) such that:
\begin{equation}
    \| \tilde{x}_k \| \le M \rho^k \| \tilde{x}_0 \|, \quad
    \| \tilde{u}_k \| = \| K \tilde{x}_k \| \le M \|K\| \rho^k \| \tilde{x}_0 \|.
\end{equation}

Now consider the stage cost deviation:
\begin{equation}
    \phi_k := \tilde{x}_k^\top C^\top Q C \tilde{x}_k + \tilde{u}_k^\top R \tilde{u}_k + s^\top \tilde{x}_k + r^\top \tilde{u}_k,
\end{equation} 
Each term is quadratic or linear in \( \tilde{x}_k \) or \( \tilde{u}_k \), so we can bound it by:
\begin{equation}\label{abs_upper}
    | \phi_k | \le \lambda_1 \| \tilde{x}_k \|^2 + \lambda_2 \| \tilde{u}_k \|^2 + \lambda_3 \| \tilde{x}_k \| + \lambda_4 \| \tilde{u}_k \|,
\end{equation}
for some constants \( \lambda_1, \lambda_2, \lambda_3, \lambda_4 > 0 \).

Using the exponential decay, we obtain:
\begin{equation}
    | \phi_k | \le C_1 \rho^{2k} \| \tilde{x}_0 \|^2 + C_2 \rho^k \| \tilde{x}_0 \|,
\end{equation}
for some constants \( C_1, C_2 > 0 \). Therefore, the infinite-horizon cost is bounded:
\begin{equation}
\begin{aligned}
    J(\tilde{x}_0) &\le \sum_{k=0}^{\infty} \left( C_1 \rho^{2k} \| \tilde{x}_0 \|^2 + C_2 \rho^k \| \tilde{x}_0 \| \right) \\
    &= \| \tilde{x}_0 \|^2 \cdot \frac{C_1}{1 - \rho^2} + \| \tilde{x}_0 \| \cdot \frac{C_2}{1 - \rho} < \infty.
\end{aligned}
\end{equation}

Thus, a stabilizing admissible control policy exists that yields finite cost. The optimal value function is:
\[
V(\tilde{x}_0) = \inf_{\{ \tilde{u}_k \}} J(\tilde{x}_0),
\]
and is finite for all \( \tilde{x}_0 \). By standard dynamic programming theory, it follows that:
\begin{itemize}
    \item \( V(\tilde{x}) \) is finite and continuous,
    \item There exists an optimal stationary policy \( \tilde{u}_k = \pi(\tilde{x}_k) \),
    \item \( V(\tilde{x}) \) satisfies the Bellman equation:
    \begin{equation}
        V(\tilde{x}_k) = \min_{\{u_k\}_{k=0}^\infty} \left\{ \left| C(x_k, u_k) - C_{ss} \right| + V(\tilde{x}_{k+1}) \right\}.
    \end{equation}
\end{itemize}
\end{proof}

\noindent
By Assumption 2, the optimal controller for the deviation system can be transformed into a controller for the original dynamics. Specifically, the steady-state pair \( (x_{ss}, u_{ss}) \) is obtained by solving:
\begin{equation} \label{steady_state}
    \begin{bmatrix}
        x_{ss} \\
        u_{ss}
    \end{bmatrix}
    =
    \begin{bmatrix}
        A - I & B \\
        C & \mathbf{0}
    \end{bmatrix}^{-1}
    \begin{bmatrix}
        \mathbf{0} \\
        r_{ss}
    \end{bmatrix}.
\end{equation}

Then, the optimal tracking control law becomes:
\begin{equation}
    u_k = \tilde{u}_k + u_{ss}.
\end{equation}

Although Lemma 1 establishes the existence of a value function, the resulting function is piecewise and lacks a general closed-form expression for arbitrary linear systems. Motivated by the construction used in the proof of Lemma 1, we propose an approximate solution by minimizing an upper bound of the stage cost, formulated as follows:

\begin{equation}\label{upper}
    \tilde{J} = \sum_{k=0}^\infty  \tilde{x}_k^\top C^\top Q C \tilde{x}_k + \tilde{u}_k^\top R \tilde{u}_k +\left| s^\top\tilde{x}_{k} + r^\top \tilde{u}_{k} \right|
\end{equation}

Note that the upper bound of the cost in \eqref{upper} remains finite for the same reasons established in the proof above. By minimizing this upper bound, we also implicitly minimize the original average-cost index. While this approximation does not guarantee an optimal solution—and may yield a suboptimal one—it offers a practical advantage: the separation of quadratic and linear terms allows the cost to be handled using linear programming, which is well-suited for MPC implementation. Therefore, we view this formulation as a trade-off between computational accuracy and implementation feasibility. It is worth noting that various forms of upper bounds, such as the one in \eqref{abs_upper}, can be used to approximate the average-cost index. However, care must be taken not to overestimate the cost too aggressively, as the MPC-generated solution is itself only an approximation of the true optimal solution.
The MPC formulation for this problem is given by:
\begin{equation}\label{MPC_cost}
\begin{aligned}
& \underset{u_k, k={t,\cdots},t+N-1}{\text{minimize}}
&\tilde{J}_k= \sum_{k=t}^{t+N-1} &  \bigr[\tilde{x}_k^\top C^\top Q C \tilde{x}_k + \tilde{u}_k^\top R \tilde{u}_k \\
&&&+\left|  s^\top\tilde{x}_{k} + r^\top \tilde{u}_{k} \right|\bigr] + \tilde{J}_N
\\
& \text{subject to}
&\tilde{x}_{k+1}&=A\tilde{x}_k+B\tilde{u}_k
\end{aligned}
\end{equation}

The terminal cost \( \tilde{J}_N \) in \eqref{MPC_cost} can be approximated in various ways, as discussed in \cite{johansson2024stablempcmaximalterminal, kohler2021stability, moreno2023predictive}. In this work, we adopt a simple and practical approach by rolling out the traditional Linear Quadratic Regulator (LQR) optimal feedback policy \( \hat{u}_k = -K x_k \) over a finite prediction horizon \( h \). The resulting terminal cost is given by:
\begin{equation}
\tilde{J}_N = \sum_{k=t+N}^{h} \tilde{x}_k^\top C^\top Q C \tilde{x}_k + \hat{u}_k^\top R \hat{u}_k + \left| s^\top \tilde{x}_k + r^\top \hat{u}_k \right|
\end{equation}

\section{Scalar Example}

Consider the scalar system:
\begin{equation}
\begin{cases}
    x_{k+1} = 2x_k + u_k, \\
    y_k = x_k,
\end{cases}
\end{equation}
with the objective of tracking the constant reference signal \( r_k = 1 \), and stage cost parameters \( Q = 1 \), \( R = 1 \).

The cost function is defined as:
\begin{equation}\label{ex_upper}
    \tilde{J} = \sum_{k=0}^\infty \tilde{x}_k^2 + \tilde{u}_k^2 + \left| \tilde{x}_k - \tilde{u}_k \right|,
\end{equation}
where \( \tilde{x}_k = x_k - x_{ss} \) and \( \tilde{u}_k = u_k - u_{ss} \) are the deviations from the steady-state.

Denote by \( K_{\mathrm{LQR}} \) and \( P_{\mathrm{LQR}} \) the optimal feedback gain and value function matrix associated with the standard LQR formulation, respectively. For this scalar system, their numerical values are:
\[
P_{\mathrm{LQR}} = 4.2361, \quad K_{\mathrm{LQR}} = 1.618.
\]

The presence of the absolute value term \( |\tilde{x}_k - \tilde{u}_k| \) in the cost function introduces nonlinearity, resulting in a piecewise structure in the value function. The corresponding critical switching boundaries in the state-input space are:
\[
\tilde{u}_k + 2\tilde{x}_k = 0, \quad \text{and} \quad \tilde{x}_k - \tilde{u}_k = 0.
\]

Using the Bellman equation:
\begin{equation}
    V(\tilde{x}_k) = \min_{\{u_k\}} \left\{ \tilde{x}_k^2 + \tilde{u}_k^2 + \left| \tilde{x}_k - \tilde{u}_k \right| + V(\tilde{x}_{k+1}) \right\},
\end{equation}
and enforcing continuity of the value function at the switching points, we obtain an approximate closed-form solution based on numerical computation:

\paragraph{Value Function}
\begin{equation}
\begin{aligned}
   & V(\tilde{x}_k) =\\
    &\begin{cases}
        5\tilde{x}_k^2 + 3|\tilde{x}_k|, & \text{if } |\tilde{x}_k| \leq 0.5, \\
        \dfrac{26\tilde{x}_k^2 + 22|\tilde{x}_k| - 1}{6}, & \text{if } 0.809>|\tilde{x}_k| >0.5\\
        P_{LQR}\tilde{x}_k^2 + 4.236|\tilde{x}_k|-0.50003, &\text{otherwise}
    \end{cases}    
\end{aligned}
\end{equation}

\paragraph{Optimal Control Policy}
\begin{equation}\label{exact}
    \tilde{u}_k =
    \begin{cases}
        -2\tilde{x}_k, & \text{if } |\tilde{x}_k| \leq 0.5, \\
        \dfrac{-10\tilde{x}_k - 1}{6}, & \text{if } 0.809>\tilde{x}_k >0.5\\
        \dfrac{-10\tilde{x}_k + 1}{6}, & \text{if } -0.809 <\tilde{x}_k <-0.5\\
        -K_{LQR}\tilde{x}_k -0.29, & \text{if } 0.809 <\tilde{x}_k \\
        -K_{LQR}\tilde{x}_k + 0.29, & \text{if } -0.809 >\tilde{x}_k 
    \end{cases}
\end{equation}

The following table presents a numerical comparison of the total cost values obtained using three different control strategies, all simulated with the initial condition \( x(0) = 12 \).
\begin{table}[]
    \centering
        \caption{Comparison of cost function types across control methods}
    \begin{tabular}{|c|c|c|c|}
    \hline
    \textbf{Method} & \textbf{Cost index \eqref{true_avg}} & \textbf{Cost index} \eqref{ex_upper}  & \textbf{LQR cost}\\
    \hline
     LQR & 420.3626  & 420.3626 & \textbf{319.5642} \\
     Controller \eqref{exact} & \textbf{420.2428}& \textbf{420.2428}& 392.9962\\
     MPC & 611.2143 & 659.0715 & 659.0715\\
     \hline
    \end{tabular}

    \label{tab:my_label}
\end{table}

\bibliographystyle{IEEEtran}
\bibliography{sample}
\end{document}